\documentclass[11pt,twoside]{article}
\usepackage[left=25.4mm, right=25.4mm, top=25.4mm, bottom=25.4mm]{geometry}   
\usepackage[utf8]{inputenc}                              
\usepackage[english]{babel}                              
\usepackage[hidelinks]{hyperref}                         
\usepackage{booktabs, multirow}                          
\usepackage{amsmath, amsfonts, stmaryrd, amssymb, amsbsy}        
\usepackage{caption, subcaption}   
\usepackage{graphicx}
\usepackage[all]{xy}
\usepackage{tikz-cd}                                     
\usepackage{enumitem}                                    
\usepackage{xcolor, graphicx}                            
\usepackage[noblocks]{authblk}                           
\usepackage{titlesec}                                    
\usepackage{fancyhdr}                                    
\usepackage{etoolbox,textcomp}
\usepackage[noadjust]{cite}
\apptocmd{\thebibliography}{\setlength{\itemsep}{-3pt}}{}{}
\usepackage{amsthm}
\theoremstyle{plain}
\newtheorem{theorem}{Theorem}[section]
\newtheorem{lemma}[theorem]{Lemma}
\newtheorem{proposition}[theorem]{Proposition}
\newtheorem{corollary}[theorem]{Corollary}

\theoremstyle{definition}

\theoremstyle{remark}

\newcommand{\R}{\mathbb R}
\newcommand{\Z}{\mathbb Z}

\newcommand{\Sph}{\mathbb S}
\newcommand{\Prj}{\mathbb P}
\newcommand{\G}{\mathcal G}
\newcommand{\J}{\mathcal J}

\newcommand{\F}{\mathcal F}

\newcommand{\xm}{\mathfrak X(M)}

\newcommand{\dm}{\Omega^1 (M)}

\newcommand{\GTM}{\Gamma (\mathbb TM)}
\newcommand{\TM}{\mathbb TM}

\makeatletter
\def\blfootnote{\gdef\@thefnmark{}\@footnotetext}
\makeatother

\title{\bf On the triviality of the generalized tangent bundle
  \blfootnote{\em E-mail addresses:}
  \blfootnote{Fernando Etayo: \href{mailto:fernando.etayo@unican.es}{fernando.etayo@unican.es}}
  \blfootnote{Pablo Gómez-Nicolás: \href{mailto:pablogomeznicolas@gmail.com}{pablogomeznicolas@gmail.com}}
  \blfootnote{Rafael Santamaría: \href{mailto:rsans@unileon.es}{rsans@unileon.es}}
}
\author[*]{Fernando Etayo}
\author[ \hspace{-1ex}]{Pablo Gómez-Nicolás}
\author[$\dag$]{Rafael Santamaría}
\affil[*]{\footnotesize Departamento de Matemáticas, Estadística y Computación, Facultad de Ciencias, Universidad de Cantabria, Avda. de los Castros, s/n, 39071, Santander, Spain}
\affil[$\dag$]{\footnotesize Departamento de Matemáticas, Escuela de Ingenierías Industrial, Informática y Aeroespacial, Universidad de León, Campus de Vegazana, 24071, León, Spain}
\date{\small \today}

\begin{document}

\maketitle

\begin{abstract}
\noindent We study the relations between the triviality of the tangent bundle $TM$ and the generalized tangent bundle $\TM = TM\oplus T^*M$ of a manifold. We show that the generalized tangent bundle of a paralellizable manifold is trivial. We also prove that the converse implication does not hold, by studying the cases of the Möbius strip, spheres and projective spaces. Finally, we relate the triviality of the generalized tangent bundle to generalized geometric structures.
\end{abstract}

{\noindent\small {\bf 2020 Mathematics Subject Classification:} 53D18, 57R22}

{\noindent\small {\bf Keywords:} Generalized tangent bundle, trivial vector bundle}

\thispagestyle{empty}
\section{Introduction}
\label{SECTION:INTRODUCTION}

The \emph{generalized tangent bundle} of a smooth manifold $M$, introduced by T. J. Courant in \cite{COURANT1990} and later studied in depth by N. Hitchin, G. Cavalcanti and M. Gualtieri in \cite{CAVALCANTI2004, GUALTIERI2004, HITCHIN2003}, is defined as the Whitney sum of its tangent and cotangent bundles, namely, $\TM := TM\oplus T^*M\to M$. Sections of $\TM$ can be seen as sums of vector fields and 1-forms, that is, $\GTM = \xm\oplus \dm$. Much of the existing research in generalized geometry focuses on geometric structures on $\TM$, such as generalized almost complex and generalized almost paracomplex structures (see \cite{GOMEZNICOLAS2025,IDAMANEA2017,NANNICINI2010,WADE2004}). However, there appears to be relatively little research focusing on specific manifolds; some studies that do address particular manifolds include \cite{CAVALCANTIGUALTIERI2006}, concerning 4-dimensional manifolds, and \cite{ETAYOGOMEZNICOLASSANTAMARIA2025}, studying a version of the Hopf problem on $\Sph^6$ in generalized geometry. Moreover, as far as we know, there are no studies relating the topological properties of $\TM$ to those of the underlying manifold.

A noticeable topological property of a smooth manifold is its parallelizability. A vector bundle $E\to M$ of rank $k$ over a manifold $M$ is said to be \emph{trivial} if it is isomorphic as a vector bundle to $M\times \R^k\to M$; equivalently, if there exist $k$ global sections $X^1,\ldots, X^k\in \Gamma(E)$ that are pointwise linearly independent. If the tangent bundle $TM\to M$ of a manifold is trivial, then $M$ is called \emph{parallelizable}. Examples of parallelizable manifolds include the spheres $\Sph^1$, $\Sph^3$ and $\Sph^7$, the projective spaces $\R\Prj^1$, $\R\Prj^3$ and $\R\Prj^7$, Lie groups, smooth closed orientable 3-dimensional manifolds, and contractible manifolds; in fact, every vector bundle over a contractible manifold is known to be trivial. Various invariants can be used to measure the non-triviality of a vector bundle. For example, it is well known that the vanishing of all the Stiefel-Whitney classes $w_i(E)\in H^i(M;\Z_2)$ for $i > 0$ is a necessary condition for the bundle to be trivial, but this condition is not sufficient.

In this brief note we study how the triviality of $TM$ is related to that of $\TM$. In Section \ref{SECTION:TRIVIALITYGENERALIZEDTANGENTBUNDLE} we prove that the triviality of the tangent bundle of a manifold implies the triviality of its generalized tangent bundle, and consequently the generalized tangent bundle of a parallelizable manifold is trivial. After that, we show that the converse implication does not hold by proving that the generalized tangent bundle of the Möbius strip $S$ is trivial. The idea of the proof is to construct four nowhere-vanishing sections of $\mathbb TS$ from vector fields that vanish at different points, by transforming these vector fields into 1-forms using an auxiliary Riemannian metric. We also show that the generalized tangent bundle of any sphere is trivial. We then use Stiefel-Whitney classes to study the case of the real projective spaces $\R\Prj^n$ such that $n+1$ is not a power of $2$, which are not parallelizable and whose generalized tangent bundle are non-trivial. Table \ref{tab:introduction} summarizes the main results and examples, comparing the triviality of the tangent bundle of a manifold with that of its generalized tangent bundle. Finally, in Section \ref{SECTION:RELATIONOFTRIVIALITYWITHGENERALIZEDGEOMETRICSTRUCTURES} we relate the triviality of the generalized tangent bundle to different generalized geometric structures previously studied in the literature.

\begin{table}[h]
\begin{center}
\begin{tabular}{lcc}
\toprule
    			 & {\bf$\TM$ trivial} & {\bf$\TM$ non-trivial} \\
\midrule
\multirow{5}{*}{{\bf$TM$ trivial}}	   & $\Sph^1$, $\Sph^3$, $\Sph^7$ 				& \multicolumn{1}{c}{\multirow{5}{*}{No}}  \\
 								  	   & $\R\Prj^1$, $\R\Prj^3$, $\R\Prj^7$  		&										   \\
 								  	   & Lie groups  								&										   \\
 								  	   & 3-dimensional closed orientable manifolds  &										   \\
 								  	   & Contractible manifolds			 			&										   \\
\midrule
\multirow{3}{*}{{\bf$TM$ non-trivial}} & Möbius strip	 	  	   & \multicolumn{1}{c}{\multirow{3}{*}{$\R\Prj^n$ for $n\neq 2^k - 1$}}	\\
 								  	   & Klein bottle 			   &														\\
 								  	   & $\Sph^n$ for $n\neq1,3,7$ &														\\
\bottomrule
\end{tabular}
\caption{Summary of the relations between the triviality of $TM$ and $\TM$.}
\label{tab:introduction}
\end{center}
\end{table}

\section{Triviality of the generalized tangent bundle}
\label{SECTION:TRIVIALITYGENERALIZEDTANGENTBUNDLE}

\subsection{Parallelizable manifolds}
\label{SUBSECTION:PARALLELIZABLEMANIFOLDS}

We begin by assuming that $M$ is a real parallelizable $n$-dimensional smooth manifold (Hausdorff and second countable).
\begin{proposition}
\label{prop:parallelizableimpliestriviality}
The generalized tangent bundle $\TM$ of a parallelizable manifold $M$ is trivial.
\end{proposition}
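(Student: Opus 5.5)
The idea is to build a global frame of $\TM$ directly from a global frame of $TM$, using its dual coframe. Since $M$ is parallelizable, we fix $n$ global vector fields $X_1,\ldots,X_n\in\xm$ that are pointwise linearly independent, so that at every $p\in M$ the vectors $X_1(p),\ldots,X_n(p)$ form a basis of $T_pM$. Define 1-forms $\xi^1,\ldots,\xi^n\in\dm$ by the duality conditions $\xi^i(X_j)=\delta^i_j$. At each point these are the elements of the dual basis of $T_p^*M$.

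The first thing to check is that the $\xi^i$ are smooth. Smoothness is local, so we work in a chart with coordinates $x^1,\ldots,x^n$ and write $X_j=\sum_k a^k_j\,\partial_k$, where $(a^k_j)$ is a smooth matrix-valued function. This matrix is invertible at every point. Its inverse $(b^i_k)$ is therefore smooth, since its entries are rational in the $a^k_j$ by Cramer's rule and the determinant never vanishes. Then $\xi^i=\sum_k b^i_k\,dx^k$ is smooth. Equivalently, $T^*M$ is trivialized by the dual of the trivialization $TM\cong M\times\R^n$.

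Next we consider the $2n$ sections $X_1,\ldots,X_n,\xi^1,\ldots,\xi^n$ of $\GTM=\xm\oplus\dm$, each regarded as an element of the direct sum in the obvious way ($X_j+0$ and $0+\xi^i$). At every $p$ they are linearly independent in $T_pM\oplus T_p^*M$. To see this, suppose $\sum_j\lambda^jX_j(p)+\sum_i\mu_i\xi^i(p)=0$. Projecting onto the two summands gives $\sum_j\lambda^jX_j(p)=0$ and $\sum_i\mu_i\xi^i(p)=0$. Each of these families is a basis of its summand, so all coefficients vanish. We then have $2n$ pointwise independent global sections of a rank-$2n$ bundle, and by the equivalent characterization of triviality recalled in the introduction, $\TM\cong M\times\R^{2n}$.

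The only step that needs any care is the smoothness of the dual coframe; everything else is linear algebra. One could even bypass it, since $TM\cong M\times\R^n$ implies $T^*M\cong M\times(\R^n)^*$ and Whitney sums of trivial bundles are trivial.
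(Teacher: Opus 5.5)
Your proof is correct and follows essentially the same strategy as the paper: build a global frame of $T^*M$ from the global frame of $TM$, then check pointwise independence of the combined $2n$ sections in $T_pM\oplus T_p^*M$. The only difference is how the 1-forms are produced. The paper takes $\flat_g X^i$ for an auxiliary Riemannian metric $g$, while you use the dual coframe; this is exactly $\flat_g X_i$ for the metric $g=\sum_i \xi^i\otimes\xi^i$ that makes the frame orthonormal, so your version avoids choosing a metric at the cost of the (correct) Cramer's-rule smoothness check.
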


\begin{proof}
Since $M$ is parallelizable, there exist $n$ vector fields $X^1,\ldots,X^n\in \xm$ which are pointwise linearly independent. Then, we can introduce $n$ 1-forms using the isomorphism $\flat_g\colon \xm\to \dm$ induced by a Riemannian metric $g$ on $M$, such that $(\flat_g X)Y = g(X, Y)$ for every $X,Y\in \xm$. The differential forms $\flat_gX^1,\ldots,\flat_gX^n\in \dm$ are also pointwise linearly independent. Thus, the $2n$ sections $X^1,\flat_gX^1,X^2,\flat_gX^2,\ldots,X^n,\flat_gX^n\in \GTM$ are linearly independent at each point $p\in M$, which proves the result.
\end{proof}

This result implies that the generalized tangent bundle of manifolds such as Lie groups or the spheres $\Sph^1$, $\Sph^3$ and $\Sph^7$ is trivial. Other examples of parallelizable manifolds are listed in Table \ref{tab:introduction}.

\subsection{Non-parallelizable manifolds with trivial generalized tangent bundle}
\label{SUBSECTION:NONPARALLELIZABLEMANIFOLDSTRIVIALGENERALIZED}

The next results show that the triviality of the generalized tangent bundle of a manifold does not imply the triviality of its tangent bundle. We first study the Möbius strip, which can be defined as the quotient space
\begin{equation}
    S = \R\times (-1, 1) / {\sim}, \quad (u, v) \sim (u + m, (-1)^m v), \quad m\in\Z.
\label{eq:mobiusbanddefinition}
\end{equation}
This manifold is not orientable, hence $TS$ is not trivial. However, one can show that there exist four sections of $\mathbb TS$ that are pointwise linearly independent.

\begin{proposition}
\label{prop:trivialitymobius}
The generalized tangent bundle $\mathbb TS$ of the Möbius strip $S$ is trivial.
\end{proposition}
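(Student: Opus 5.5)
I would prove the statement by writing down four explicit global sections of $\mathbb TS$ in the coordinates $(u,v)$ of the description above and checking that they are pointwise linearly independent. The introduction already suggests the route: start from vector fields on $S$ that may vanish somewhere, and pair each with the $1$-form of a different vector field through an auxiliary metric, so that the zeros never coincide.

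\textbf{Step 1: frame and transformation rule.} On $\R\times(-1,1)$ consider the frame $\partial_u,\partial_v, du, dv$. Under the deck transformation $(u,v)\mapsto(u+1,-v)$ we have $\partial_u\mapsto\partial_u$, $du\mapsto du$, while $\partial_v\mapsto-\partial_v$ and $dv\mapsto -dv$. A section $a\,\partial_u+b\,\partial_v+c\,du+e\,dv$ with $a,b,c,e$ functions on the strip therefore descends to $S$ exactly when
\[
a(u+1,-v)=a(u,v),\quad c(u+1,-v)=c(u,v),\quad b(u+1,-v)=-b(u,v),\quad e(u+1,-v)=-e(u,v).
\]
I would use the flat metric $g=du^2+dv^2$, which is invariant, so $\flat_g\partial_u=du$ and $\flat_g\partial_v=dv$.

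\textbf{Step 2: the sections.} The difficulty is that $\partial_v$ and $dv$ are not global, so I would twist them by functions that are odd under the identification, such as $\cos(\pi u)$ and $\sin(\pi u)$. Both satisfy $f(u+1)=-f(u)$ and they never vanish simultaneously. I would take
\[
\sigma_1=\partial_u,\quad \sigma_2=du,\quad \sigma_3=\cos(\pi u)\,\partial_v+\sin(\pi u)\,dv,\quad \sigma_4=-\sin(\pi u)\,\partial_v+\cos(\pi u)\,dv.
\]
In the language of the introduction, $\sigma_3=X+\flat_g Y$ with $X=\cos(\pi u)\partial_v$ and $Y=\sin(\pi u)\partial_v$, two vector fields vanishing along different curves, and similarly for $\sigma_4$. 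All four sections are well defined on $S$ by the parity rule of Step 1.

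\textbf{Step 3: linear independence.} In the frame $(\partial_u,\partial_v,du,dv)$ the coefficient matrix is block diagonal. One block is the identity on $(\partial_u,du)$; the other is the rotation matrix by angle $\pi u$ on $(\partial_v,dv)$. Its determinant is therefore $1$ at every point, so the four sections form a global frame of $\mathbb TS$, which proves the statement.

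\textbf{Main obstacle.} The only delicate point is the construction in Step 2: finding coefficient functions that change sign under the identification, which forces them to vanish in one slot, while guaranteeing the determinant never vanishes. The rotation by angle $\pi u$ handles both requirements at once, so I expect this step to go through. If a less coordinate-dependent argument were wanted, one could instead note that $\mathbb TS\cong \epsilon^2\oplus L\oplus L$, where $L$ is the Möbius line bundle, and that $L\oplus L\cong L\otimes\C$ is trivial as a complex line bundle over a space homotopic to $\Sph^1$.
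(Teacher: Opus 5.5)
Your proof is correct and is essentially the paper's argument. The paper takes $X=\partial_u$, $Y=\cos(\pi u)\partial_v$, $Z=\sin(\pi u)\partial_v$ and the sections $X$, $\flat_g X$, $Y-\flat_g Z$, $Z+\flat_g Y$, which is your frame with rotation angle $-\pi u$ instead of $\pi u$ and an arbitrary metric instead of the flat one. Your determinant computation and parity rule on the covering are cleaner substitutes for the paper's pointwise case analysis (at $u=0$, $u=1/2$ and elsewhere) and its chart-by-chart check that the fields descend.
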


\begin{proof}
Using the definition of the Möbius strip given in Eq. \eqref{eq:mobiusbanddefinition}, one can endow $S$ with a smooth structure by considering the open subsets $U = S\setminus \{[(0, v)]\in S\}$ and $V = S\setminus \{[(1/2, v)]\in S\}$. The coordinate charts associated to $U$ and $V$, $x\colon U \longrightarrow \R^2$ and $y\colon V \longrightarrow \R^2$, are defined as follows:
\[
    x([(u, v)]) = (u, v) \enspace\text{with } 0<u<1, \quad\quad y([(u, v)]) = \left\lbrace
    \begin{array}{ll}
        (u, v)       & \text{if }0\leq u < 1/2,  \\
        (u - 1, -v)  & \text{if }1/2 < u < 1.
    \end{array}
    \right.
\]
Then, considering that $U\cap V = \{[(u, v)]\in S\colon 0<u<1/2\enspace \mathrm{or}\enspace 1/2<u<1\}$, the transition map $y\circ x^{-1}\colon x(U\cap V)\to y(U\cap V)$ is given by
\begin{equation}
    (y\circ x^{-1})(u, v) = \left\lbrace
    \begin{array}{ll}
        (u, v)       & \text{if }0 < u < 1/2,  \\
        (u - 1, -v)  & \text{if } 1/2 < u < 1.
    \end{array}
    \right.
\label{eq:transitionmapmobius}
\end{equation}
	
From Eq. \eqref{eq:transitionmapmobius}, it is immediate that the local coordinate fields associated to these charts fulfill the following relations for any $[(u,v)]\in U\cap V$:
\[
    \left.\frac{\partial}{\partial y^1}\right|_{[(u, v)]} = \left.\frac{\partial}{\partial x^1}\right|_{[(u, v)]}, \quad\quad \left.\frac{\partial}{\partial y^2}\right|_{[(u, v)]} = \left\lbrace
    \begin{array}{rl}
        \left.\dfrac{\partial}{\partial x^2}\right|_{[(u, v)]}   & \text{if }0 < u < 1/2,  \\
        -\left.\dfrac{\partial}{\partial x^2}\right|_{[(u, v)]}  & \text{if }1/2 < u < 1.
    \end{array}
    \right.
\]
We can define the following smooth vector fields on $\R^2$ using standard coordinates $(u, v)$:
\[
    X = \frac{\partial}{\partial u}, \quad Y = \cos(\pi u)\frac{\partial}{\partial v}, \quad Z = \sin(\pi u)\frac{\partial}{\partial v}.
\]
These vector fields are smooth on $\R^2$. If we restrict these vector fields to $\R\times (-1, 1)$ and use the quotient map $q\colon \R\times (-1, 1)\to S$, we must check that $q_*(X)$, $q_*(Y)$ and $q_*(Z)$ are invariant under the equivalence relation, so that they descend to well-defined vector fields on $S$.

\begin{figure}[hbt!]
\begin{center}
    \begin{subfigure}[h]{\textwidth}
        \centering
        \includegraphics[width=0.22\textwidth]{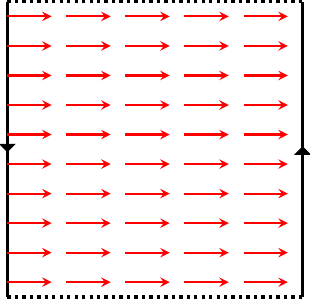}
        \hspace{1cm}
        \includegraphics[width=0.275\textwidth]{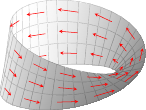}
        \caption{Vector field $q_*(X)$.}
        \label{fig:mobiusx}
    \end{subfigure}\par\vspace{.5cm}
    \begin{subfigure}[h]{\textwidth}
        \centering
        \includegraphics[width=0.22\textwidth]{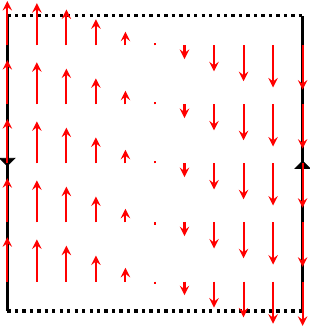}
        \hspace{1cm}
        \includegraphics[width=0.275\textwidth]{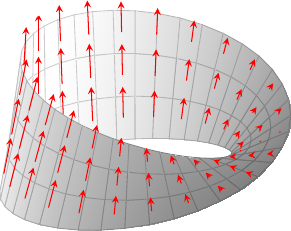}
        \caption{Vector field $q_*(Y)$.}
        \label{fig:mobiusy}
    \end{subfigure}\par\vspace{.5cm}
    \begin{subfigure}[h]{\textwidth}
        \centering
        \includegraphics[width=0.22\textwidth]{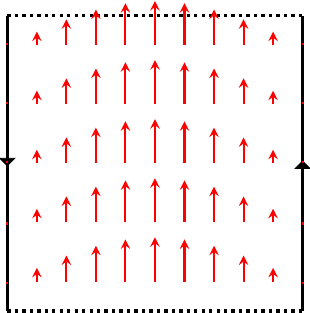}
        \hspace{1cm}
        \includegraphics[width=0.275\textwidth]{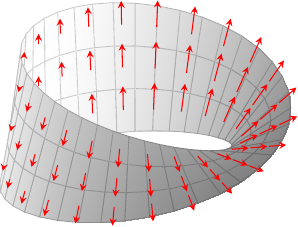}
        \caption{Vector field $q_*(Z)$.}
        \label{fig:mobiusz}
    \end{subfigure}
    \caption{Smooth vector fields defined on the Möbius strip $S$.}
    \label{fig:mobius}
\end{center}
\end{figure}

To see that they are globally defined, it is necessary to check the behavior of $q_*(X)$, $q_*(Y)$, $q_*(Z)$ at the points $[(0, v)]$. As $\cos(0) = -\cos(\pi) = 1$ and $\sin(0) = -\sin(\pi) = 0$, then
\[
    q_*(X_{(0, v)}) = q_*(X_{(1, -v)}),\quad q_*(Y_{(0, v)}) = q_*(Y_{(1, -v)}),\quad q_*(Z_{(0, v)}) = q_*(Z_{(1, -v)}).
\]
Therefore, $q_*(X), q_*(Y), q_*(Z)$ are well-defined.
    
We also need to check that they are smooth. Using the coordinate chart $(U, x)$, the fields $q_*(X)$, $q_*(Y)$, $q_*(Z)$ are locally described as
\[
    q_*(X) = \frac{\partial}{\partial x^1}, \quad q_*(Y) = \cos(\pi x^1)\frac{\partial}{\partial x^2}, \quad q_*(Z) = \sin(\pi x^1)\frac{\partial}{\partial x^2}.
\]
The description in the coordinate chart $(V, y)$ is analogous:
\[
    q_*(X) = \frac{\partial}{\partial y^1}, \quad q_*(Y) = \cos(\pi y^1)\frac{\partial}{\partial y^2}, \quad q_*(Z) = \sin(\pi y^1)\frac{\partial}{\partial y^2}.
\]
As $\cos(\pi(x - 1)) = -\cos(\pi x)$ and $\sin(\pi(x - 1)) = -\sin(\pi x)$, these coordinate representations are compatible with each other. Therefore, $q_*(X)$, $q_*(Y)$, $q_*(Z)$ are smooth vector fields.

To simplify notation, from now on the vector fields $q_*(X)$, $q_*(Y)$, $q_*(Z)$ will be denoted as $X$, $Y$, $Z$ respectively. These three vector fields are represented in Figure \ref{fig:mobius}. Then, using any auxiliary Riemannian metric $g$ on the Möbius strip, we obtain the following sections $w^1,w^2,w^3,w^4\in \Gamma(\mathbb TS)$ of the generalized tangent bundle:
\begin{equation}
\label{eq:trivializationmobiusstrip}
    w^1 = X, \quad w^2 = \flat_g X, \quad w^3 = Y - \flat_g Z, \quad w^4 = Z + \flat_g Y.  
\end{equation}

It must be checked that these sections are linearly independent at every point in $S$. It is clear that it suffices to check the linear independence of $w^3$ and $w^4$. At any point $[(0, v)]$, using the coordinate chart $(V,y)$ we have that $Y_{[(0, v)]} = \left.\frac{\partial}{\partial y^2}\right|_{[(0, v)]}$ and $Z_{[(0, v)]} = 0$, hence the linear independence of $w^3_{[(0, v)]}$ and $w^4_{[(0, v)]}$ ($w^3_{[(0, v)]}$ is in $TS$, whereas $w^4_{[(0, v)]}$ is in $T^*S$). At any $[(1/2, v)]$, with the coordinate chart $(U,x)$ it is immediate to see that $Y_{[(1/2, v)]} = 0$ and $Z_{[(1/2, v)]} = \left.\frac{\partial}{\partial x^2}\right|_{[(1/2, v)]}$, therefore $w^3_{[(1/2, v)]}$ is in $T^*S$ and $w^4_{[(1/2, v)]}$ is in $TS$. Finally, for $[(u, v)]$ with $u\neq 0, 1/2$, using the coordinate chart $(U,x)$ it is
\[
    \begin{split}
        w^3_{[(u, v)]} &= \cos(\pi u)\left.\frac{\partial}{\partial x^2}\right|_{[(u, v)]} - \sin(\pi u)\flat_g\left.\frac{\partial}{\partial x^2}\right|_{[(u, v)]}, \\
        w^4_{[(u, v)]} &= \sin(\pi u)\left.\frac{\partial}{\partial x^2}\right|_{[(u, v)]} + \cos(\pi u)\flat_g\left.\frac{\partial}{\partial x^2}\right|_{[(u, v)]}.
    \end{split}
\]

If $0 < u < 1/2$, then $\cos(\pi u) > 0$ and $\sin(\pi u) > 0$. Thus, there will not exist any nonzero $\lambda\in\R$ such that $\lambda w^3_{[(u, v)]} = w^4_{[(u, v)]}$. Analogously, if $1/2 < u < 1$, then $\cos(\pi u) < 0$ and $\sin(\pi u) > 0$, hence the linear independence of $w^3_{[(u, v)]}$ and $w^4_{[(x, y)]}$.

We have shown that $w^1$, $w^2$, $w^3$, $w^4$ are four sections of the generalized tangent bundle of the Möbius strip that are linearly independent. Therefore, $\mathbb TS$ is a trivial vector bundle.
\end{proof}

We can also find an example of a compact manifold whose tangent bundle is not trivial but with trivial generalized tangent bundle. This manifold is the Klein bottle, defined as the quotient
\[
    K = \R\times [-1, 1] / {\sim}, \quad (u,-1) \sim (u,1), \quad (u, v) \sim (u + m, (-1)^m v), \quad m\in \Z.
\]
Following an analogous procedure to the previous one, it can be seen that the Klein bottle $K$ admits four sections in $\Gamma(\mathbb TK)$ which are linearly independent at every point of $K$.

We study now the behavior of the generalized tangent bundle of the spheres $\Sph^n$. It is widely known that, even though $T\Sph^n$ is trivial only for $n = 1,3,7$, it is always \emph{stably trivial}: the Whitney sum of $T\Sph^n$ with the trivial bundle $\Sph^n\times \R$ is isomorphic to the trivial bundle $\Sph^n\times \R^{n+1}$. The following result, due to J. Allard \cite{ALLARD1980}, shows that the Whitney sum of several copies of a stably trivial vector bundle may be trivial.

\begin{proposition}[{\cite[Theorem 1.1]{ALLARD1980}}]
\label{prop:stablytrivialtheorem}
Let $E\to M$ be a stably trivial vector bundle such that $E\oplus (M\times \R^k)$ is isomorphic as a vector bundle to $M\times \R^m$. Then, $E\oplus\overset{r}{\ldots}\oplus E\to M$ is trivial for $r\geq k + \frac{k}{m-k}$.
\end{proposition}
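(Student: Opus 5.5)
We take $n=m-k$ to be the rank of $E$ and write $rE$ for $E\oplus\overset{r}{\ldots}\oplus E$. This is a result quoted from \cite{ALLARD1980}, so the paper will cite it rather than prove it. If I had to prove it, I would first turn the stable triviality into a \emph{linear-algebraic presentation} of $E$, and then cancel the trivial summands after taking enough copies. The plan is as follows; it is not a completed argument, and the last step is genuinely open.

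\textbf{Presentation of $E$.} Fix an isomorphism $E\oplus(M\times\R^k)\cong M\times\R^m$. Projecting onto the second summand gives a smooth map $A\colon M\to \mathrm{Hom}(\R^m,\R^k)$ with $A(p)$ of rank $k$ and $E_p\cong\ker A(p)$. Equivalently, $E$ has $m$ global sections $s_1,\dots,s_m$ (the projections of the constant frame) which span every fibre, subject only to the $k$ pointwise independent relations encoded by the rows of $A$.

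\textbf{Stable triviality of $rE$.} Taking direct sums gives
\[
rE\oplus(M\times\R^{k})\;\cong\;(r-1)E\oplus(M\times\R^m)\;\cong\;(r-1)E\oplus(M\times\R^{k})\oplus(M\times\R^{n}).
\]
Iterating this shows $rE\oplus(M\times\R^k)\cong M\times\R^{rn+k}$. So $rE$ is again stably trivial with the same $k$, and the problem reduces to cancelling the $k$ trivial lines.

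\textbf{The cancellation step.} The inequality $r\geq k+\frac{k}{n}$ is equivalent to writing $r=k+s$ with $sn\geq k$. This suggests splitting
\[
rE = kE\oplus sE, \qquad kE\cong \mathrm{Hom}(M\times\R^k,E),
\]
where $kE$ sits inside the trivial bundle $\mathrm{Hom}(M\times\R^k,M\times\R^m)$ as the complement of $\mathrm{Hom}(M\times\R^k,M\times\R^k)$. That last bundle has the canonical nowhere-vanishing section given by the identity. The idea is to use the $sn\geq k$ available directions of $sE$ to rotate this canonical $k$-frame into $kE\oplus sE$. Concretely, I would write down $rn$ explicit sections of $rE$ built from the $s_i$ and the entries of $A$. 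This should be modelled on the case of the spheres $\Sph^n$ with $k=1$, $m=n+1$, $r=2$, where $2T\Sph^n\cong T\Sph^n\otimes\mathbb{C}$ is trivialized by the projections of the complex coordinate fields. I would then check pointwise linear independence, using that $A(p)$ has full rank $k$.

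\textbf{Main obstacle.} The hard part is this last step. I must produce the explicit frame, or an equivalent homotopy-theoretic argument lifting the classifying data through $O(rn+k)\to O(rn+k)/O(rn)$, with a bound that does not depend on $\dim M$. If the explicit construction resists, my fallback is to argue via the stable range: I would show that the obstruction to cancelling $M\times\R^k$ from $rE\oplus(M\times\R^k)$ vanishes once $sn\geq k$. This would use the fact that $sE$ has enough sections coming from the $s_i$ to split off the required trivial subbundle, possibly after an application of Proposition~\ref{prop:parallelizableimpliestriviality}-type $\flat_g$ pairings.
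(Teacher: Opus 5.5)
You are right that the paper gives no proof here: it imports Allard's Theorem~1.1, so there is no internal argument to compare with. But your text is a plan, not a proof, and the gap is exactly the step that carries the content. The presentation $E_p\cong\ker A(p)$ and the stable triviality of $rE$ are correct but easy. The real work, producing a frame of $rE$ from the hypothesis $sn\ge k$, is left as an intention (``rotate the canonical $k$-frame'', ``write down $rn$ explicit sections''), with no construction and no independence check.

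The proposed mechanism is also unclear. The identity section lies in $\mathrm{Hom}(M\times\R^k,M\times\R^k)$, which is the summand you are discarding, and nothing indicates where $sn\ge k$ would be used. Even the model case is misdescribed. The projected coordinate fields $P_xe_j=e_j-x_jx$ on $\Sph^n$ satisfy $\sum_j x_jP_xe_j=0$, so no $n$ of them form a frame: dropping $e_\ell$ fails wherever $x_\ell=0$. Hence they do not by themselves trivialize $T\Sph^n\otimes\mathbb C$.

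The fallback fails as stated. You claim $sE$ can split off a trivial rank-$k$ subbundle once $sn\ge k$, but this is false. For $E=T\Sph^{2j}$ with $k=s=1$, $sE$ has no nowhere-vanishing section. The $\flat_g$ pairing of Proposition~\ref{prop:parallelizableimpliestriviality} only transports an existing frame; it never creates one.

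More fundamentally, obstruction theory for cancelling trivial summands is controlled by the dimension of the base, not by $sn\ge k$. The missing idea for a dimension-free bound is reduction to the universal case. Gram--Schmidt on the image of the constant frame of $M\times\R^k$ gives $f\colon M\to V_k(\R^m)$ with $E\cong f^*\gamma^\perp$, where $\gamma^\perp$ is the orthogonal complement of the tautological $k$-frame. So it suffices to treat $r\gamma^\perp$ over the Stiefel manifold, which has dimension $kn+\frac{k(k-1)}{2}$. Since $BO(rn)\to BO(rn+k)$ is $rn$-connected, $r\gamma^\perp$ is trivial once $rn>kn+\frac{k(k-1)}{2}$.

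For $k=1,2$ this is exactly $r\ge k+\frac kn$. Note that $k=1$ is all the paper uses, and there the inequality $2n>n$ on $\Sph^n$ itself already suffices. For $k\ge 3$, however, this argument is in general weaker than the stated bound: for $k=n=3$ it gives $r\ge 5$ instead of $r\ge 4$. Reaching $r\ge k+\frac kn$ in general needs further input specific to $\gamma^\perp$ over $V_k(\R^m)$. That is precisely what Allard's theorem provides and your proposal does not.
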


As a consequence, the generalized tangent bundle of every sphere can be shown to be trivial.

\begin{corollary}
\label{cor:generalizedtangentbundlesn}
The generalized tangent bundle of the $n$-dimensional sphere $\Sph^n$ is trivial.
\end{corollary}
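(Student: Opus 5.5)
The plan is to reduce the statement to Proposition \ref{prop:stablytrivialtheorem} with $r=2$. The first step is to show that $T^*\Sph^n\cong T\Sph^n$. This holds for any Riemannian manifold: the map $\flat_g$ of Proposition \ref{prop:parallelizableimpliestriviality}, applied pointwise, is a vector bundle isomorphism $TM\to T^*M$. It follows that
\[
\mathbb T\Sph^n = T\Sph^n\oplus T^*\Sph^n\cong T\Sph^n\oplus T\Sph^n ,
\]
so it suffices to prove that $T\Sph^n\oplus T\Sph^n$ is trivial.

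Next I would record the stable triviality of $T\Sph^n$. With the standard embedding $\Sph^n\subset\R^{n+1}$, the outward unit normal field spans a trivial line bundle $\nu\cong\Sph^n\times\R$. Then
\[
T\Sph^n\oplus(\Sph^n\times\R)\cong T\Sph^n\oplus\nu = T\R^{n+1}|_{\Sph^n}\cong \Sph^n\times\R^{n+1}.
\]
In the notation of Proposition \ref{prop:stablytrivialtheorem} this means $E=T\Sph^n$, $k=1$ and $m=n+1$. The threshold in that proposition is therefore
\[
k+\frac{k}{m-k}=1+\frac1n .
\]
For $n\geq 1$ this is at most $2$, so $r=2$ is admissible. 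The proposition then gives that $T\Sph^n\oplus T\Sph^n$ is trivial, and by the first step so is $\mathbb T\Sph^n$.

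The only point that needs care is the inequality $2\geq 1+1/n$, which holds for every $n\geq1$. In the case $n=1$ it is an equality, and in any event $\Sph^1$ is parallelizable, so that case is already covered by Proposition \ref{prop:parallelizableimpliestriviality}. The remaining ingredients, namely the normal-bundle splitting and the bundle isomorphism induced by $g$, are standard. The one thing to confirm is that $\flat_g$ is indeed a bundle isomorphism rather than merely an isomorphism of modules of sections. This is immediate because it is smooth and fiberwise a linear isomorphism, since $g$ is nondegenerate.
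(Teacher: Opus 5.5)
Your proof is correct and matches the paper's argument: you identify $\mathbb T\Sph^n$ with $T\Sph^n\oplus T\Sph^n$, use the stable triviality $T\Sph^n\oplus(\Sph^n\times\R)\cong\Sph^n\times\R^{n+1}$ (so $k=1$, $m=n+1$), and apply Proposition \ref{prop:stablytrivialtheorem} with $r=2\geq 1+1/n$. You also justify both isomorphisms, via $\flat_g$ and via the normal line bundle, which the paper simply asserts.
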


\begin{proof}
Since the vector bundles $T\Sph^n\oplus (\Sph^n\times \R)$ and $\Sph^n\times \R^{n+1}$ are isomorphic, as well as $\mathbb T\Sph^n$ and $T\Sph^n\oplus T\Sph^n$, then for any dimension $n$ we have
\[
	1 + \frac{1}{(n+1) - 1} = 1 + \frac{1}{n} \leq 2.
\]
Thus, the result is immediatly inferred from Proposition \ref{prop:stablytrivialtheorem}.
\end{proof}

The cases $n = 1,3,7$ were already analyzed in Subsection \ref{SUBSECTION:PARALLELIZABLEMANIFOLDS}: since $\Sph^1$, $\Sph^3$ and $\Sph^7$ are parallelizable, Proposition \ref{prop:parallelizableimpliestriviality} implies that their generalized tangent bundles are trivial.

\subsection{Non-parallelizable manifolds with non-trivial generalized tangent bundle}
\label{SUBSECTION:NONPARALLELIZABLEMANIFOLDSNONTRIVIALGENERALIZED}

Finaly, we study the existence of non-parallelizable smooth manifolds whose generalized tangent bundle is non-trivial. To this end, we make use of Stiefel–Whitney classes $w_k(\TM)\in H^k(M;\Z_2)$. An exhaustive description of these cohomology classes and their properties can be found in \cite{HATCHER2017, MILNORSTASHEFF1974}. These classes are invariant under vector bundle isomorphisms; hence, a necessary condition for a vector bundle $E\to M$ to be trivial is that $w_k(E) = 0$ for $k>0$.

It is important to recall the relation satisfied by the Stiefel-Whitney classes of the sum of two vector bundles: if $E\to M$ and $F\to M$ are vector bundles over $M$, then the Stiefel-Whitney classes of the Whitney sum $E\oplus F\to M$ satisfy, for every $k$,
\begin{equation}
	w_k(E\oplus F) = \sum_{i=0}^kw_i(E)w_{k-i}(F).
\label{eq:whitneysumclass}
\end{equation}
If we take the \emph{total Stiefel-Whitney class} $w(E) := 1 + w_1(E) + w_2(e) + \ldots$ of $E\to M$, then Eq. \eqref{eq:whitneysumclass} transforms into
\begin{equation}
	w(E\oplus F) = w(E)w(F).
\label{eq:whitneysumtotalclass}
\end{equation}

To find a non-parallelizable manifold whose generalized tangent bundle is non-trivial, we focus on real projective spaces, whose cohomology groups modulo 2 are widely known.

\begin{lemma}[{\cite[Lemma 4.3]{MILNORSTASHEFF1974}}]
The cohomology groups $H^k(\R\Prj^n;\Z_2)$ are cyclic of order 2 for $0\leq k\leq n$. Furthermore, each non-trivial group $H^k(\R\Prj^n;\Z_2)$ is generated by $a^k$, where $a$ is the non-zero element of $H^1(\R\Prj^n;\Z_2)$.
\end{lemma}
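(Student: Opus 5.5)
The plan is to compute $H^*(\R\Prj^n;\Z_2)$ via cellular cohomology and then obtain the cup-product structure from a geometric, Poincaré-duality-style argument.

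\textbf{Additive structure.} $\R\Prj^n$ has a CW structure with exactly one cell $e^k$ in each dimension $0\le k\le n$, obtained by attaching $e^k$ along the double cover $\Sph^{k-1}\to\R\Prj^{k-1}$. The cellular boundary $\partial e^k$ has degree $1+(-1)^k$, which is $0$ or $2$. Either way it vanishes modulo $2$, so all cellular coboundaries with $\Z_2$ coefficients are zero. Hence $H^k(\R\Prj^n;\Z_2)\cong\Z_2$ for $0\le k\le n$ and vanishes otherwise, which proves the first claim.

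\textbf{Multiplicative structure.} We argue by induction on $n$ that $a^k\neq 0$ for all $k\le n$. The inclusion $\R\Prj^{n-1}\subset\R\Prj^n$ is an isomorphism on $H^k$ for $k<n$, since it is the $(n-1)$-skeleton. It is also a ring map, so by induction it suffices to show that $a\cdot a^{n-1}\neq 0$ in $H^n(\R\Prj^n;\Z_2)$. A natural route is to show that the pairing $H^k\times H^{n-k}\to H^n$ given by the cup product is nondegenerate; the hypothesis $a^{n-1}\neq0$ then forces $a^n\neq 0$.

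\textbf{Main obstacle: nondegeneracy of the pairing.} I would argue concretely, as in Milnor–Stasheff. Take homogeneous coordinates $[x_0:\dots:x_n]$ and the subspaces $P^k=\{x_{k+1}=\dots=x_n=0\}$ and $P^{n-k}=\{x_0=\dots=x_{k-1}=0\}$. These meet transversally in the single point $[0:\dots:0:1:0:\dots:0]$, with the $1$ in position $k$. Let $U=\{x_k\neq 0\}$, which is a copy of $\R^n$ containing that point. Choose a generator $\alpha\in H^k$ that restricts nontrivially to $P^k$ and a class $\beta\in H^{n-k}$ that restricts nontrivially to $P^{n-k}$. Then use the diagram relating
\[
H^k(\R\Prj^n,\R\Prj^n\setminus P^{n-k})\otimes H^{n-k}(\R\Prj^n,\R\Prj^n\setminus P^{k})
\longrightarrow H^n(\R\Prj^n,\R\Prj^n\setminus\{pt\}),
\]
together with excision to $U\cong\R^k\times\R^{n-k}$. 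There the relative cup product is the standard product $H^k(\R^k,\R^k\setminus 0)\otimes H^{n-k}(\R^{n-k},\R^{n-k}\setminus0)\to H^n(\R^n,\R^n\setminus0)$, which is an isomorphism. The nontrivial point to check carefully is that the map $H^k(\R\Prj^n,\R\Prj^n\setminus P^{n-k})\to H^k(\R\Prj^n)$ is onto; this holds because $\R\Prj^n\setminus P^{n-k}$ deformation retracts onto $P^{k-1}$. Once this is established, $\alpha\beta$ maps to the generator of $H^n(\R\Prj^n,\R\Prj^n\setminus pt)\cong H^n(\R\Prj^n;\Z_2)$, so $\alpha\beta\neq0$.

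With $k=1$ and $\alpha=a$, the induction gives $a^n=a\cdot a^{n-1}\neq0$, so each $H^k$ is generated by $a^k$.

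Alternatively, one could simply cite the standard computation $H^*(\R\Prj^n;\Z_2)\cong\Z_2[a]/(a^{n+1})$ from the cited references.
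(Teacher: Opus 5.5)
The paper gives no proof of this lemma; it quotes it from Milnor--Stasheff and uses it only as input for the Stiefel--Whitney computation, so there is no internal argument to compare with. Your route is a correct, self-contained proof in outline: mod-$2$ cellular cochains give the groups, and induction plus the geometric pairing with $P^k\cap P^{n-k}=\{p\}$ gives the ring structure. The second part is essentially the proof of Theorem 3.12 in Hatcher's \emph{Algebraic Topology}. A shorter route to the multiplicative part is the mod-$2$ Gysin sequence of the double cover $\Sph^n\to\R\Prj^n$. This is the sphere bundle of the tautological line bundle, and the sequence shows directly that multiplication by $a$ is injective $H^{i-1}\to H^i$ for $1\le i\le n$.

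One step is justified incorrectly. You cannot pass from $H^k(\R\Prj^n,\R\Prj^n\setminus P^{n-k})$ to $H^k(U,U\setminus P^{n-k})$ by excision. The set you would excise, $\R\Prj^n\setminus U=\{x_k=0\}$, meets $P^{n-k}$ in $\{x_0=\dots=x_k=0\}\cong\R\Prj^{n-k-1}$, which is nonempty for $k<n$. So it does not lie in the open set $\R\Prj^n\setminus P^{n-k}$. Symmetrically, $\{x_k=0\}$ meets $P^k$ in $P^{k-1}$, so the same problem arises for the factor carrying $\beta$. Excision is fine for the right-hand column, $H^n(\R\Prj^n,\R\Prj^n\setminus p)\cong H^n(U,U\setminus p)$.

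For the left-hand factors you need a separate reason why a lift $\alpha'$ of $\alpha$ restricts to a generator of $H^k(U,U\setminus P^{n-k})\cong H^k(\R^k,\R^k\setminus 0)$. This is exactly where your choice ``$\alpha$ restricts nontrivially to $P^k$'' does the work:
\begin{enumerate}
\item Restrict $\alpha'$ further to $(P^k,P^k\setminus p)$. Since $P^k\setminus p$ retracts onto $P^{k-1}$, the map $H^k(P^k,P^k\setminus p)\to H^k(P^k)$ is an isomorphism, so this restriction is nonzero.
\item The map $(P^k,P^k\setminus p)\to(U\cap P^k,U\cap P^k\setminus p)\cong(\R^k,\R^k\setminus 0)$ is a genuine excision, because the excised set $P^{k-1}$ misses $p$.
\item This restriction factors through $H^k(U,U\setminus P^{n-k})$, which maps isomorphically onto $H^k(\R^k\times 0,(\R^k\setminus 0)\times 0)$. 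Hence $\alpha'|_U$ is a generator.
\end{enumerate}
The same argument applies to $\beta'$, and then your cross-product computation on $\R^k\times\R^{n-k}$ finishes the proof.

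A smaller point: inclusion of the $(n-1)$-skeleton is in general only injective on $H^{n-1}$. It is an isomorphism here because all mod-$2$ cellular coboundaries vanish, which you have already shown, so say so explicitly.
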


\begin{proposition}[{\cite[Theorem 4.5]{MILNORSTASHEFF1974}}]
The total Stiefel-Whitney class of the tangent bundle of $\R\Prj^n$ is
\begin{equation}
	w(T(\R\Prj^n)) = (1 + a)^{n+1} = 1 + \binom{n+1}{1}a + \binom{n+1}{2}a^2 + \ldots + \binom{n+1}{n} a^n,
	\label{eq:stiefelwhitneyrpn}
\end{equation}
where $a$ is the non-zero element of $H^1(\R\Prj^n; \Z_2)$ and the binomial coefficients are taken modulo $2$.
\end{proposition}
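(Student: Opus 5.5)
The plan is to follow the classical argument of Milnor and Stasheff. First I will show that $T(\R\Prj^n)$ is stably isomorphic to a Whitney sum of $n+1$ copies of the canonical line bundle $\gamma$. Then I will compute $w(\gamma)=1+a$ and apply the Whitney product formula \eqref{eq:whitneysumtotalclass}.

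\textbf{Step 1: identify the tangent bundle.} Let $\gamma\to\R\Prj^n$ be the canonical line bundle, whose fibre over a line $L\subset\R^{n+1}$ is $L$ itself. Let $\gamma^\perp$ be its orthogonal complement in the trivial bundle $\R\Prj^n\times\R^{n+1}$. A tangent vector to $\Sph^n$ at $x$ is a pair $(x,v)$ with $x\cdot v=0$. Since $\R\Prj^n=\Sph^n/\{\pm1\}$, a tangent vector to $\R\Prj^n$ at the line $L$ spanned by $x$ is a pair $\{(x,v),(-x,-v)\}$. Such a pair is the same datum as the linear map $\ell\colon L\to L^\perp$ with $\ell(x)=v$. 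This gives an isomorphism $T(\R\Prj^n)\cong\mathrm{Hom}(\gamma,\gamma^\perp)$. Checking that this identification is well defined, smooth and fibrewise linear is routine but requires care with the antipodal quotient. I expect it to be the main obstacle, and the only step that is not pure bookkeeping.

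\textbf{Step 2: add a trivial summand.} The bundle $\mathrm{Hom}(\gamma,\gamma)$ is trivial, since it has the nowhere-vanishing section given by the identity map. Therefore
\[
T(\R\Prj^n)\oplus(\R\Prj^n\times\R)\cong \mathrm{Hom}(\gamma,\gamma^\perp\oplus\gamma)\cong \mathrm{Hom}(\gamma,\R\Prj^n\times\R^{n+1})\cong (n+1)\,\mathrm{Hom}(\gamma,\R\Prj^n\times\R)= (n+1)\,\gamma^*.
\]
Using the Euclidean metric gives $\gamma^*\cong\gamma$. Because the total Stiefel--Whitney class is unchanged by adding a trivial bundle, \eqref{eq:whitneysumtotalclass} yields $w(T(\R\Prj^n))=w(\gamma)^{n+1}$.

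\textbf{Step 3: compute $w(\gamma)$.} Since $\gamma$ has rank one, $w(\gamma)=1+w_1(\gamma)$, and by the preceding lemma $w_1(\gamma)$ is either $0$ or $a$. To exclude $0$, restrict to $\R\Prj^1\subset\R\Prj^n$. There $\gamma$ restricts to the canonical bundle of $\R\Prj^1$, which is the Möbius line bundle. This bundle is non-trivial: a nowhere-vanishing section would give a continuous choice $s(x)=f(x)x$ with $f(-x)=-f(x)$, and then $f$ must vanish somewhere on the connected circle. For line bundles over $\R\Prj^1$, non-triviality means non-zero $w_1$. The inclusion $\R\Prj^1\hookrightarrow\R\Prj^n$ induces an isomorphism on $H^1(-;\Z_2)$. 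By naturality, $w_1(\gamma)=a$.

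Combining the steps gives $w(T(\R\Prj^n))=(1+a)^{n+1}$. Expanding binomially with coefficients mod $2$, and using $a^{k}=0$ for $k>n$, gives \eqref{eq:stiefelwhitneyrpn}.
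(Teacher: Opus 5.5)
Your proposal is correct and is essentially the proof of the cited result in Milnor--Stasheff: Lemma~4.4 gives $T(\R\Prj^n)\cong\mathrm{Hom}(\gamma,\gamma^\perp)$, and this is followed by the stable splitting into $n+1$ copies of $\gamma$ and the Whitney product formula. The paper gives no proof of its own and simply quotes Theorem~4.5; the only minor difference is in your Step~3, where you justify $w_1(\gamma)\neq 0$ by the fact that $w_1$ detects non-trivial line bundles, whereas Milnor--Stasheff take the non-vanishing of $w_1$ for the canonical bundle over $\R\Prj^1$ as their normalization axiom.
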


These results allow us to prove that $\mathbb T\R\Prj^n$ is non-trivial when $n+1$ is not a power of $2$.

\begin{proposition}
The generalized tangent bundle of the real projective space $\R\Prj^n$ is non-trivial if $n+1$ is not a power of $2$.
\end{proposition}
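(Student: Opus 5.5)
The plan is to compute the total Stiefel--Whitney class of $\mathbb T\R\Prj^n$ and exhibit a nonzero class of positive degree. This would contradict triviality, since Stiefel--Whitney classes are invariant under bundle isomorphisms and vanish in positive degree for trivial bundles.

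First I would note that $T^*\R\Prj^n\cong T\R\Prj^n$ as vector bundles. The isomorphism $\flat_g$ induced by any Riemannian metric $g$, already used in Proposition \ref{prop:parallelizableimpliestriviality}, provides it. Hence $w(T^*\R\Prj^n)=w(T\R\Prj^n)$. By Eq. \eqref{eq:whitneysumtotalclass} and Eq. \eqref{eq:stiefelwhitneyrpn}, working mod $2$ where $(1+a)^2=1+a^2$, this gives
\[
w(\mathbb T\R\Prj^n)=(1+a)^{2(n+1)}=(1+a^2)^{n+1}=\sum_{j\ge 0}\binom{n+1}{j}a^{2j}.
\]
By the lemma on $H^*(\R\Prj^n;\Z_2)$, the term $a^{2j}$ is nonzero exactly when $2j\le n$. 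So the goal reduces to finding some $j\ge 1$ with $2j\le n$ and $\binom{n+1}{j}$ odd. Then $w_{2j}(\mathbb T\R\Prj^n)=a^{2j}\neq 0$.

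The key step, and the only real content, is this binomial choice. I would write $n+1=2^r m$ with $m$ odd, and take $j=2^r$, the lowest power of $2$ in the binary expansion of $n+1$. By Lucas' theorem, or Kummer's theorem since adding $2^r$ and $n+1-2^r$ in binary produces no carries, $\binom{n+1}{2^r}$ is odd.

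It remains to check that $2j\le n$, and this is where the hypothesis enters. Since $n+1$ is not a power of $2$, we have $m\ge 3$, so $n+1\ge 3\cdot 2^r$. Therefore $2^{r+1}\le n+1-2^r\le n$, using $2^r\ge 1$. Hence $w_{2^{r+1}}(\mathbb T\R\Prj^n)=a^{2^{r+1}}\ne 0$, and $\mathbb T\R\Prj^n$ is non-trivial.

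I expect no serious obstacle. The only care needed is the degree bound $2j\le n$, which ensures the class does not vanish for dimensional reasons. It is also consistent with the power-of-$2$ case: there $(1+a^2)^{2^k}=1+a^{2^{k+1}}=1$ in $H^*(\R\Prj^{2^k-1};\Z_2)$, as it must be.
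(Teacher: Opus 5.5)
Your proof is correct and follows essentially the same route as the paper: both compute $w(\mathbb T\R\Prj^n)=w(T\R\Prj^n)^2$ and exhibit the nonzero class $w_{2^{r+1}}(\mathbb T\R\Prj^n)=a^{2^{r+1}}$, using $m\geq 3$ for the same degree bound $2^{r+1}\leq n$. The only difference is cosmetic. The paper gets the odd coefficient from the mod-$2$ identity $(1+a)^{2^{r+1}m}=(1+a^{2^{r+1}})^m$, reading off the coefficient $m$, instead of applying Lucas' theorem to $\binom{n+1}{2^r}$; these amount to the same fact.
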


\begin{proof}
If $n+1$ is not a power of $2$, then we can write $n + 1 = 2^km$ for a certain integer $k\geq 0$ and an odd number $m>1$. Using Eq. \eqref{eq:stiefelwhitneyrpn}, we compute the total Stiefel-Whitney class of the tangent bundle of $\R\Prj^n$:
\[
	w(T\R\Prj^n) = (1 + a)^{n+1} = (1 + a)^{2^km} = (1 + a^{2^k})^m.
\]
Since the tangent and cotangent bundle of $\R\Prj^n$ are diffeomorphic, their total Stiefel-Whitney classes are equal, that is, $w(T\R\Prj^n) = w(T^*\R\Prj^n)$. Therefore, using Eq. \eqref{eq:whitneysumtotalclass} we obtain
\[
	\begin{split}
		w(\mathbb T\R\Prj^n) &= w(T\R\Prj^n) w(T^*\R\Prj^n) = w(T\R\Prj^n)^2 = (1 + a^{2^k})^{2m} = (1 + a^{2^{k+1}})^m  \\
							 & = 1 + \binom{m}{1}a^{2^{k+1}} + \binom{m}{2}a^{2\cdot 2^{k+1}} + \ldots = 1 + m a^{2^{k+1}} + \frac{m(m-1)}{2}a^{2\cdot 2^{k+1}} + \ldots
	\end{split}
\]
Since $m$ is odd, $m\equiv 1\ \mathrm{mod\ 2}$. On the other hand, $m > 1$ and hence $2^{k+1} = 2^k\cdot 2 < 2^km$. Therefore, $\binom{m}{1}a^{2^{k+1}}\neq 0$ and thus the total class $w(\mathbb T\R\Prj^n)$ is not trivial.
\end{proof}

It is worth remarking the differences between the tangent bundle and the generalized tangent bundle of spheres and projective spaces, as can be seen in Table \ref{tab:introduction}. For $n=1,3,7$, both $\Sph^n$ and $\R\Prj^n$ are parallelizable manifolds, and therefore their generalized tangent bundles are trivial by Proposition \ref{prop:parallelizableimpliestriviality}. For $n\neq 2^k-1$, neither $\Sph^n$ nor $\R\Prj^n$ is parallelizable; nevertheless, $\mathbb T\Sph^n$ is trivial, whereas $\mathbb T\R\Prj^n$ is not.

\section{Relation of triviality with generalized geometric structures}
\label{SECTION:RELATIONOFTRIVIALITYWITHGENERALIZEDGEOMETRICSTRUCTURES}

As is well known, the triviality of the tangent bundle of a manifold entails significant topological consequences. In particular, the manifold must be orientable, and if its dimension is even, it admits an almost complex structure. Our aim is to establish analogous results concerning the triviality of the generalized tangent bundle.

Proposition \ref{prop:trivialitymobius} shows that the triviality of $\TM$ does not imply that $M$ is orientable, which constitutues an important difference with respect to $TM$. However, geometric structures can still be defined on $\TM$. For example, using a trivialization $w^1,\ldots,w^{2n}\in \GTM$ of $\TM$ one can obtain a \emph{weak generalized almost complex structure}, i.e., a vector bundle endomorphism $\J\colon \TM\to \TM$ such that $\J^2 = -\mathcal Id$. One such endomorphism can be defined by setting, for each $p\in M$
\begin{equation}
\label{eq:inducedgenalmostcomplexstructure}
	\J(w^{2i-1}_p) = w^{2i}_p,\quad \J(w^{2i}_p) = -w^{2i-1}_p,\quad i = 1,2,\ldots,n.
\end{equation}
This is not the only geometric structure that can be obtained: if we define the endomorphism $\F\colon \TM\to \TM$ as
\begin{equation}
\label{eq:inducedgenalmostparacomplexstructure}
	\F(w^{2i-1}_p) = w^{2i}_p,\quad \F(w^{2i}_p) = w^{2i-1}_p,\quad i = 1,2,\ldots,n,
\end{equation}
we obtain a \emph{weak generalized almost paracomplex structure}, i.e., a vector bundle endomorphism such that $\F^2 = \mathcal Id$ and whose $+1$-eigenbundle has rank $n$.

In general, it is challenging to obtain explicit expressions for the $2n$ sections $w^1,\ldots,w^{2n}$. Nevertheless, Propositions \ref{prop:parallelizableimpliestriviality} and \ref{prop:trivialitymobius} describe these sections for parallelizable manifolds and for the Möbius strip, respectively. This makes it possible to particularize Eqs. (\ref{eq:inducedgenalmostcomplexstructure}, \ref{eq:inducedgenalmostparacomplexstructure}) to these cases.

First, suppose that $M$ is parallelizable and $g$ is a metric on $M$. Then, Proposition \ref{prop:parallelizableimpliestriviality} provides $2n$ sections $X^1,\flat_g X^1,\ldots,X^n,\flat_g X^n$, where the $n$ vector fields $X^1,\ldots,X^n\in \xm$ are pointwise linearly independent and $\flat_g\colon TM\to T^*M$ is given by $(\flat_gX)Y = g(X,Y)$. In this case, the weak generalized almost complex structure $\J\colon \TM\to \TM$ from Eq. \eqref{eq:inducedgenalmostcomplexstructure} is
\[
	\J X^i_p = \flat_g X^i_p,\quad \J(\flat_g X^i_p) = -X^i_p,\quad i = 1,2,\ldots,n.
\]
Therefore, if we write $\sharp_g = \flat_g^{-1}$ then it can be immediately seen that $\J$ takes the form
\begin{equation}
\label{eq:jg}
	\J(X + \xi) = - \sharp_g\xi + \flat_g X,
\end{equation}
for each $X\in T_pM$ and $\xi\in T_p^*M$. Similarly, the weak generalized almost paracomplex structure $\F\colon \TM\to \TM$ from Eq. \eqref{eq:inducedgenalmostparacomplexstructure} is
\begin{equation}
\label{eq:fg}
	\F(X + \xi) = \sharp_g\xi + \flat_g X.
\end{equation}

In the case of the Möbius strip $S$, by Proposition \ref{prop:trivialitymobius} we have the trivialization $w^1 = X$, $w^2 = \flat_g X$, $w^3 = Y - \flat_g Z$, $w^4 = Z + \flat_gY$, given in Eq. \eqref{eq:trivializationmobiusstrip}. Then, the endomorphism $\J\colon \mathbb TS\to \mathbb TS$ from Eq. \eqref{eq:inducedgenalmostcomplexstructure} is
\[
	\J X_p = \flat_g X_p,\quad \J(\flat_gX_p) = -X_p,\quad \J(Y_p - \flat_g Z_p) = Z_p + \flat_g Y_p,\quad \J(Z_p + \flat_g Y_p) = -Y_p + \flat_g Z_p.
\]
From these identities, it follows that $\J(X + \xi) = -\sharp_g\xi + \flat_g X$, which has the same expression as Eq. \eqref{eq:jg}. Similarly, the weak generalized almost paracomplex structure $\F\colon \TM\to \TM$ from Eq. \eqref{eq:inducedgenalmostparacomplexstructure} is $\F(X + \xi) = \sharp_g\xi + \flat_g X$, as in Eq. \eqref{eq:fg}.

The structure $\J$ from Eq. \eqref{eq:jg} has been studied by A. Nannicini \cite{NANNICINI2010}, whereas $\F$ from Eq. \eqref{eq:fg} has been studied by C. Ida and A. Manea \cite{IDAMANEA2017}. Both $\J$ and $\F$ are defined on the generalized tangent bundle of every (pseudo-)Riemannian manifold $(M, g)$, without requiring $\TM$ to be trivial, and their properties are remarkable in their own right. However, here we show that these structures also arise naturally in the present context of the trivialization of $\TM$ when $M$ is parallelizable, and that of the proof of Proposition \ref{prop:trivialitymobius} concerning the Möbius strip.

Generalized almost complex and generalized almost paracomplex structures were at first required to be compatible with the \emph{canonical pairing} $\G_0\in \Gamma((\TM)^*\otimes(\TM)^*)$ \cite{GUALTIERI2011, WADE2004}, defined as
\[
	\G_0(X + \xi, Y + \eta) = \frac{1}{2}(\xi(Y) + \eta(X)),
\]
for each $X,Y\in T_pM$ and $\xi,\eta\in T^*_pM$. Given a bundle endomorphism $\mathcal K\colon \TM\to \TM$, the compatibility condition is
\begin{equation}
	\G_0(\mathcal K(X + \xi), Y + \eta) = -\G_0(X + \xi, \mathcal K(Y + \eta)).
\end{equation}
Those generalized geometric structures satisfying this skew-symmetry condition were called by the present authors \emph{strong structures}, whereas those that do not are referred to as \emph{weak structures} (see \cite{ETAYOGOMEZNICOLASSANTAMARIA2025,GOMEZNICOLAS2025}). The following result shows that, even though the structures $\J$ and $\F$ from Eqs. (\ref{eq:jg}, \ref{eq:fg}) are not strong structures, they are closely related to the canonical pairing.

\begin{proposition}[{\cite[Proposition 4.5]{ETAYOGOMEZNICOLASSANTAMARIA2024}}]
The weak generalized almost complex structure $\J$ from Eq. \eqref{eq:jg} and the weak generalized almost paracomplex structure $\F$ from Eq. \eqref{eq:fg} are symmetric with respect to $\G_0$, that is to say,
\[
	\G_0(\J(X + \xi), Y + \eta) = \G_0(X + \xi, \J(Y + \eta)),\quad \G_0(\F(X + \xi), Y + \eta) = \G_0(X + \xi, \F(Y + \eta)),
\]
for every $X, Y\in T_pM$ and $\xi,\eta\in T_p^*M$.
\end{proposition}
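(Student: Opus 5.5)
The symmetry of $\J$ and $\F$ with respect to $\G_0$ is a direct pointwise computation, and I would do it by expanding both sides and comparing. Fix $p\in M$, $X,Y\in T_pM$ and $\xi,\eta\in T_p^*M$. The key facts are that $\flat_g$ is determined by $(\flat_g X)(Y)=g(X,Y)$ and that $\sharp_g=\flat_g^{-1}$, so $\eta(\sharp_g\xi)=g(\sharp_g\eta,\sharp_g\xi)$. Both quantities are symmetric in their arguments because $g$ is symmetric.

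For $\J$, I take the tangent and cotangent parts of $\J(X+\xi)=-\sharp_g\xi+\flat_gX$ and insert them into the definition of $\G_0$:
\[
2\G_0(\J(X+\xi),Y+\eta)=(\flat_gX)(Y)-\eta(\sharp_g\xi)=g(X,Y)-g(\sharp_g\xi,\sharp_g\eta).
\]
Swapping the roles of $X+\xi$ and $Y+\eta$ gives
\[
2\G_0(X+\xi,\J(Y+\eta))=2\G_0(\J(Y+\eta),X+\xi)=g(Y,X)-g(\sharp_g\eta,\sharp_g\xi).
\]
Here the first equality uses the symmetry of $\G_0$, which is evident from its definition. The two right-hand sides agree because $g$ is symmetric.

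For $\F$ the computation is the same except that the sign in front of $\sharp_g$ changes. This gives $2\G_0(\F(X+\xi),Y+\eta)=g(X,Y)+g(\sharp_g\xi,\sharp_g\eta)$, which is again symmetric under the exchange $(X,\xi)\leftrightarrow(Y,\eta)$.

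There is no real obstacle. The only points needing care are reading off the components correctly in $\G_0$, namely which form is evaluated on which vector, and using $\flat_g\sharp_g=\mathrm{id}$ to rewrite $\eta(\sharp_g\xi)$ as $g(\sharp_g\eta,\sharp_g\xi)$. The same argument works verbatim for a pseudo-Riemannian $g$, since only symmetry and nondegeneracy are used.
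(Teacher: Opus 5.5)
Your computation is correct and complete. Both identities follow from expanding $\G_0$ with $(\flat_g X)(Y)=g(X,Y)$ and $\eta(\sharp_g\xi)=g(\sharp_g\eta,\sharp_g\xi)$ and then using the symmetry of $g$; the paper gives no proof of its own here and only cites the authors' earlier work, where this direct verification is the natural argument.
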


\section*{Acknowledgements}
Part of this note is based on material from the doctoral thesis of one of the authors \cite{GOMEZNICOLAS2025}. This author, P. G.-N., would like to express his sincere gratitude to his supervisors, Fernando Etayo and Rafael Santamaría, for their invaluable guidance and encouragement throughout the development of this research. The authors would also like to thank Roberto Rubio for suggesting several questions that were worthy of further study.


\end{document}